\def\R{\mathbb{R}}
\def\C{\mathbb{C}}
\def\H{\mathbb{H}}
\def\la{\lambda}
\def\g{\mathfrak{g}}
\def\h{\mathfrak{h}}
\def\p{\mathfrak{p}}
\def\gl{\mathfrak{gl}}
\def\sl{\mathfrak{sl}}
\def\so{\mathfrak{so}}
\def\su{\mathfrak{su}}
\def\u{\mathfrak{u}}
\def\sp{\mathfrak{sp}}
\def\kalg{\mathfrak{k}}
\def\Dbdle{\mathcal{D}}
\def\Gbdle{\mathcal{G}}
\def\:{\lrcorner}
\def\#{\sharp}
\def\tens{\otimes}
\def\dsum{\oplus}
\def\prod{\times}
\def\isom{\cong}
\theoremstyle{plain}
\newtheorem{Theorem}{Theorem}[section]
\newtheorem{Lemma}[Theorem]{Lemma}
\newtheorem{Proposition}[Theorem]{Proposition}
\theoremstyle{definition}
\newtheorem{Definition}{Definition}[section]
\newtheorem*{TheoremA}{Theorem A}
\def\no{\noindent}
\begin{document}

\title{Transitive conformal holonomy groups}

\author{Jesse Alt}

\maketitle

\begin{abstract}
For $(M,[g])$ a conformal manifold of signature $(p,q)$ and dimension at least three, the conformal holonomy group $\mathrm{Hol}(M,[g]) \subset O(p+1,q+1)$ is an invariant induced by the canonical Cartan geometry of $(M,[g])$. We give a description of all possible connected conformal holonomy groups which act transitively on the M\"obius sphere $S^{p,q}$, the homogeneous model space for conformal structures of signature $(p,q)$. The main part of this description is a list of all such groups which also act irreducibly on $\R^{p+1,q+1}$. For the rest, we show that they must be compact and act decomposably on $\R^{p+1,q+1}$, in particular, by known facts about conformal holonomy the conformal class $[g]$ must contain a metric which is locally isometric to a so-called special Einstein product.
\end{abstract}




\section{Introduction}

If $(M,[g])$ is a $C^{\infty}$ manifold endowed with a conformal class of semi-Riemannian metrics $[g]$ of signature $(p,q)$, then for $p + q \geq 3$ we have a well-defined invariant $\mathrm{Hol}(M,[g])$ called its \emph{conformal holonomy} (cf. Def. \ref{conf hol def}). Conformal holonomy groups have been intensively studied in recent years as basic invariants of the canonical conformal Cartan connection and thus of conformal structures. In contrast to the semi-Riemannian holonomy $\mathrm{Hol}(M,g) \subset O(p,q)$ for some choice of $g \in [g]$, the conformal holonomy is naturally identified as a subgroup of $O(p+1,q+1)$. This is a consequence of the fact that for conformal geometry, no canonical connection of the conformal structure $(M,[g])$ can be defined on (a reduction of) the linear frame bundle; rather, a canonical Cartan connection $\omega = \omega^{[g]}$ is defined on a reduction of the second order frame bundle and $\mathrm{Hol}(M,[g]) := \mathrm{Hol}(\omega^{[g]})$. Hence a number of new features and challenges appear in the study of conformal holonomy, both in terms of obtaining classification results and geometrically interpreting conformal holonomy reduction.\\

Initial results about conformal holonomy concerned the geometric meaning of conformal holonomy groups which preserve some subspace of $\R^{p+1,q+1}$ under the standard action. For example, a $\mathrm{Hol}(M,[g])$-invariant line $\R \cdot v \subset \R^{p+1,q+1}$ corresponds, for some open dense $M_0 \subset M$, to the existence of an Einstein metric in the conformal class $[g_{\vert M_0}]$, with the sign of the scalar curvature related to the causality of $v$. At least when the vector $v$ has non-zero length, this result follows from fundamental properties of parallel sections of the so-called standard tractor bundle in conformal geometry which were known for a long time, cf. \cite{BEG} and references therein. If $v$ is a null vector, the fact that $\mathrm{Hol}(M,[g])$-invariance of $\R \cdot v$ implies the existence of a parallel standard tractor was shown by T. Leistner in \cite{Leist}.\\

A generalization of this fact is the following: a $\mathrm{Hol}(M,[g])$-invariant decomposition $\R^{p+1,q+1} = V \dsum W$ via non-degenerate subspaces $V, W$ of respective dimensions $r+1,s+1 \geq 2$ corresponds, for some open dense subset $M_0 \subset M$, to a metric $g_0 \in [g_{\vert M_0}]$ which is locally isometric to a product of Einstein metrics of dimensions $r$ and $s$ with Einstein constants satisfying a certain relation (a ``special Einstein product'', cf. e.g. Theorem 1.2 of \cite{Arm} for the relation). This result was discovered independently by F. Leitner \cite{Felipe1} and S. Armstrong \cite{Arm}. It provides a rough analog of the de Rham/Wu Decomposition Theorem for pseudo-Riemannian holonomy.\\

The types of conformal holonomy described above are called \emph{decomposable}. The results on decomposable conformal holonomy have been used to derive classification results for conformal Riemannian holonomy, cf. \cite{Felipe1, Arm}, but it should be noted that those classifications do not account for the ``singular set'' (i.e. the complement of $M_0$) which might occur. Recently, a complete (global) classification of conformal holonomy in Riemannian signature, including classification of the possible singularities, has been given in \cite{ArmLeit}.\\

On the other hand, and in contrast to the corresponding problem for Riemannian holonomy groups, \emph{irreducible} conformal holonomy groups (i.e. those which act irreducibly on $\R^{p+1,q+1}$, leaving no non-trivial subspace invariant under the standard action) play no role in classifying conformal holonomy groups in Riemannian signature. This is a result of an algebraic fact: the only connected, irreducible subgroup $G \subset O(p+1,1)$ is $SO_0(p+1,1)$ (cf. \cite{DiOlm}, as well as \cite{DiLeistNeuk}). A classification of the connected, irreducible subgroups of $O(p+1,2)$ has also been obtained, giving a short list of possible connected, irreducible conformal holonomy groups in Lorentzian signature, cf. \cite{DiLeist}.\\

The aim of the present text is to give a classification of possible irreducible conformal holonomy groups for arbitrary signature, but under the additional assumption of \emph{transitivity}. A subgroup $H \subset O(p+1,q+1)$ has a natural action on the conformal M\"obius sphere $S^{p,q} \approx (S^p \times S^q)/\mathbb{Z}_2$, which we identify with the projectivized null-cone: $$S^{p,q} = \mathbb{P}(\mathcal{N}) \isom O(p+1,q+1)/\widetilde{P},$$ where $\mathcal{N} := \{ x \in \R^{p+1,q+1} : \vert x \vert = 0 \}$ and $\widetilde{P} \subset O(p+1,q+1)$ is the stabilizer of some real null line $\ell \subset \R^{p+1,q+1}$. We call a conformal holonomy group transitive if this action is transitive. The main result is:

\begin{TheoremA} Let $H = \mathrm{Hol}(M,[g]) \subset O(p+1,q+1)$ be a connected conformal holonomy group, for a conformal manifold of signature $(p,q)$ (with $p+q \geq 3$), and assume $H$ acts transitively on the conformal M\"obius sphere $S^{p,q}$. If $H$ acts irreducibly on $\R^{p+1,q+1}$, then it is isomorphic to one of the following:

(i) $SO_0(p+1,q+1)$ for all $p, q$;

(ii) $SU(n+1,m+1)$ for $p=2n+1, q=2m+1$;

(iii) $Sp(1)Sp(n+1,m+1)$ for $p=4n+3, q=4m+3$;

(iv) $Sp(n+1,m+)$ for $p=4n+3, q=4m+3$;

(v) $Spin_0(1,8)$ for $p=q=7$;

(vi) $Spin_0(3,4)$ for $p=q=3$;

(vii) $G_{2,2}$ for $p = 3, q = 2$.

\no If $H$ does not act irreducibly on $\R^{p+1,q+1}$, then it is compact and $(M,[g])$ has decomposable conformal holonomy. In particular, there exists $g_0 \in [g]$ which is locally isometric to a special Einstein product.
\end{TheoremA}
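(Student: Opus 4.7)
The plan is to split the argument according to whether $H$ acts irreducibly on $\R^{p+1,q+1}$ or not; the non-irreducible case admits a short direct argument, while the irreducible case reduces to cross-referencing two independent classifications.

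I begin with the non-irreducible case. Suppose $V \subset \R^{p+1,q+1}$ is a proper non-zero $H$-invariant subspace. I first claim $V$ must be non-degenerate: otherwise its radical $V \cap V^\perp$ would be a non-zero totally isotropic $H$-invariant subspace, and its projectivization a proper non-empty $H$-invariant closed subset of $S^{p,q}$, contradicting transitivity. Second, I claim $V$ must be positive- or negative-definite: if $V$ were indefinite, the function $[v] \mapsto \mathrm{sign}(\vert v_V \vert^2)$ (where $v_V$ is the $V$-component of any representative null vector) would be a well-defined $H$-invariant function on $S^{p,q}$ taking both positive and negative values, again contradicting transitivity. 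Applying the same reasoning to $V^\perp$, I conclude $\R^{p+1,q+1} = V \dsum V^\perp$ is an orthogonal direct sum of two definite subspaces of opposite signs; matching signatures forces $V \isom \R^{p+1,0}$ and $V^\perp \isom \R^{0,q+1}$ after relabeling. Hence $H \subset O(p+1) \times O(q+1)$ is compact and decomposable, and the Leitner--Armstrong result recalled in the introduction produces the desired local special Einstein product.

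For the irreducible case, the plan is to cross-reference two classifications. On one hand, the connected closed subgroups of $O(p+1,q+1)$ acting irreducibly on $\R^{p+1,q+1}$ are governed by pseudo-Riemannian Berger-type lists (Di Scala--Olmos in Riemannian signature, Di Scala--Leistner in Lorentzian signature, and partial extensions in higher signature). On the other hand, transitivity on $S^{p,q} = O(p+1,q+1)/\widetilde{P}$ translates into the Lie-algebraic condition $\h + \widetilde{\p} = \so(p+1,q+1)$, to which classical results on transitive actions on projective varieties (Onishchik-type classifications) can be applied. Intersecting the two constraints should produce the seven families (i)--(vii); realization of each as a conformal holonomy is already established in the literature via explicit parabolic-geometry constructions (Fefferman-type lifts from CR, quaternionic-contact, and certain low-dimensional exceptional geometries).

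The main obstacle is completing the irreducible-case classification in full generality: in higher indefinite signature no complete Berger-type classification is available, and one must carefully rule out spurious candidates satisfying the algebraic transitivity condition $\h + \widetilde{\p} = \so(p+1,q+1)$ but failing to arise as conformal holonomies because they preserve no non-trivial conformal Weyl curvature module. This will likely require a case-by-case representation-theoretic analysis, particularly for the exceptional cases (v)--(vii) where the candidate groups embed in $\so(p+1,q+1)$ via nonstandard spin or minuscule representations. I expect the final list to emerge as precisely those irreducible subgroups meeting the transitivity condition whose stabilizer structure is compatible with the $|1|$-graded Lie algebra $\so(p+1,q+1) = \widetilde{\p}_- \dsum \widetilde{\p}_0 \dsum \widetilde{\p}_+$ associated to $\widetilde{P}$.
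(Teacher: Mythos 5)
Your treatment of the non-irreducible case is essentially sound and close to the paper's: the paper argues more directly that a non-trivial invariant subspace $V$ must satisfy $V \cap \mathcal{N} = \{0\}$ (otherwise transitivity forces $\mathcal{N} \subset V$, which is impossible since $\mathcal{N}$ is full in $\R^{p+1,q+1}$), whence $V$ and $V^{\perp}$ are definite; your radical/sign-function argument reaches the same conclusion, though note the invariant function need not take both signs --- it suffices that it is non-constant. One point you gloss over: the decomposable-holonomy results of Leitner and Armstrong only produce the special Einstein product metric on an open dense subset $M_0 \subset M$, whereas the theorem asserts the existence of $g_0 \in [g]$ on all of $M$; the paper closes this gap by observing that the relevant normal conformal Killing forms (induced by the volume forms of the definite summands) have empty zero set precisely because $H$ acts transitively on $S^{p,q}$.

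The irreducible case, however, is where the real content lies, and your proposal does not contain a proof of it --- you yourself concede that in higher indefinite signature no complete Berger-type classification exists, so the cross-referencing strategy has nothing to cross-reference against. The paper's argument avoids this entirely via two ideas absent from your outline. First, an irreducible connected conformal holonomy group is automatically \emph{semi-simple} (a consequence of Leitner's theorem on unitary conformal holonomy), so only semi-simple $H$ need be classified. Second, and decisively: transitivity of $H$ on $S^{p,q}$ lifts to transitivity of the universal cover $\tilde{H}$ on $S^p \times S^q$, and by Montgomery's theorem a maximal compact subgroup of $\tilde{H}$ already acts transitively (and, up to a covering, effectively) there; Kamerich's classification of compact groups acting transitively and effectively on products of spheres then yields a finite list of possible maximal compact subalgebras $\kalg \subset \h$, from which the candidates for $\h$ are read off from the tables of real forms and tested, case by case, for a faithful irreducible real representation of dimension $p+q+2$ preserving a form of signature $(p+1,q+1)$. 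Three survivors with $\rho(\h) \subset \so(n,\C)$ are then excluded by an explicit computation showing the action fails to be locally transitive at a well-chosen null line. Without the Montgomery--Kamerich reduction (or a substitute for it), your plan cannot be completed. Finally, your concern about ruling out candidates that ``preserve no non-trivial conformal Weyl curvature module'' is beside the point: Theorem A asserts only a necessary condition on $H$, and realization is treated separately via the Fefferman-type constructions.
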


In Section 2, we recall the definition and summarize some relevant facts about conformal holonomy groups. Section 3 then gives the proof of Theorem A, the main step of which is to classify connected, \emph{semi-simple} subgroups $H \subset O(p+1,q+1)$ which act irreducibly on $\R^{p+1,q+1}$ and transitively on $S^{p,q}$. We conclude in Section 4 with a discussion of how this classification compares with the cases of irreducible conformal holonomy groups which have been studied in the literature. To our knowledge, the irreducible conformal holonomy groups studied to date are all transitive, and they are related to Fefferman-type constructions. For example, in \cite{CapGover1, CapGover2} the conformal holonomy group $SU(n+1,m+1) \subset O(2n+2,2m+2)$ (case (ii) in Theorem A) is shown to correspond to the original Fefferman construction due to \cite{Fef76}, where a natural conformal metric is defined on an $S^1$ bundle over a non-degenerate CR manifold of signature $(n,m)$. Except for cases (iii) and (v) in Theorem A, the conformal holonomy is known to be related to such a Fefferman construction, and the literature on these is reviewed in Section 4, where we also announce results of work in progress on one of the remaining cases, (v).

\section{Background on conformal holonomy}

For an arbitrary Lie group $G$ and a closed subgroup $P \subset G$, a Cartan geometry of type $(G,P)$ is given by the data $(\pi: \Gbdle \rightarrow M,\omega)$, where $M$ is a smooth manifold of dimension $\mathrm{dim}(G/P)$, $\pi: \Gbdle \rightarrow M$ is a $P$-principal bundle over $M$, and $\omega \in \Omega^1(\Gbdle,\g)$, called the Cartan connection, is a smooth $\g$-valued $1$-form (for $\g$ the Lie algebra of $G$) which satisfies an equivariance condition, respects the fundamental vector fields corresponding to the subgroup $P$, and gives a point-wise linear isomorphism between the tangent spaces of $\Gbdle$ and $\g$ (for a general reference on Cartan geometries and parabolic geometries, the reader is referred to \cite{CSbook}). The most common convention for defining the holonomy of a Cartan connection is the following: The inclusion $P \subset G$ determines an associated $G$-principal bundle $\widehat{\Gbdle} := \Gbdle \times_P G$, and this carries a unique $G$-principal connection $\widehat{\omega} \in \Omega^1(\widehat{\Gbdle};\g)$ determined by the condition $\iota^*\widehat{\omega} = \omega$, where $\iota: \Gbdle \hookrightarrow \widehat{\Gbdle}$ is the natural inclusion $\iota: u \mapsto [(u,e_G)]$. Then one defines, for $u \in \Gbdle$, the group $\mathrm{Hol}_u(\omega) := \mathrm{Hol}_{\iota(u)}(\widehat{\omega})$ as usual via the $\widehat{\omega}$-horizontal lifts of curves in $M$ to $\widehat{\Gbdle}$.\\

In particular, a conformal manifold $(M,[g])$ of signature $(p,q)$ and dimension $p+q \geq 3$ has a canonical Cartan geometry $(\pi: \Gbdle \rightarrow M,\omega^{[g]})$ of type $(\overline{G},\overline{P})$ for $\overline{G} = \mathbb{P}O(p+1,q+1) := O(p+1,q+1)/\{\pm Id\}$ and $\overline{P} \subset \overline{G}$ the parabolic subgroup which is the image under the quotient map of the stabilizer of a null line $\ell \subset \R^{p+1,q+1}$. If we write $\ell = \R v$ for some non-zero null vector $v \in \mathcal{N} \subset \R^{p+1,q+1}$, then the projection $G := O(p+1,q+1) \rightarrow \overline{G}$ restricts to an isomorphism $P \isom \overline{P}$, where $P \subset G$ is the subgroup of elements which preserve the null ray $\R_+v$. In this way, one identifies $\overline{P} \isom P \subset O(p+1,q+1)$ and defines $\widehat{\Gbdle} := \Gbdle \times_P G$ in order to consider the conformal holonomy of $(M,[g])$ to be a subgroup of $G = O(p+1,q+1)$ rather than of its quotient $\overline{G}$:

\begin{Definition} \label{conf hol def} For a choice of points $x \in M$ and $u \in \Gbdle_x$, the conformal holonomy of $(M,[g])$ with respect to $x$ and $u$ is given by $\mathrm{Hol}_x^u(M,[g]) := \mathrm{Hol}_u(\omega^{[g]}) \subset O(p+1,q+1).$ The abstract group defined up to isomorphism by the conjugacy class of $\mathrm{Hol}_x^u(M,[g])$ in $O(p+1,q+1)$ is denoted $\mathrm{Hol}(M,[g])$.
\end{Definition}

The M\"obius sphere $S^{p,q} = \mathbb{P}(\mathcal{N}) \isom \overline{G}/\overline{P}$ is identified with the set of null lines in $\R^{p+1,q+1}$. The double covering $S^p \times S^q \rightarrow S^{p,q}$ (which is non-trivial for $p,q>0$) is then realized by the projection $G/P \rightarrow \overline{G}/\overline{P}$, noting the diffeomorphism $G/P \approx S^p \times S^q$ given by identifying the set of null rays $\{ \ell_+ = \R_+ v \subset \R^{p+1,q+1} : v \in \mathcal{N} \}$ with $S^p \times S^q$. Of course, for $p,q \geq 2$, this is the universal cover of $S^{p,q}$.\\

We note one surprising fact about unitary conformal holonomy, which is interesting to contrast to semi-Riemannian holonomy theory:

\begin{Theorem} \label{unitary holonomy} (Leitner, \cite{Felipe2}) If $(M,[g])$ is a conformal manifold of signature $(2n+1,2m+1)$ and dimension at least $4$, and $\mathrm{Hol}(M,[g]) \subset U(n+1,m+1)$, then the connected component of $\mathrm{Hol}(M,[g])$ is contained in $SU(n+1,m+1)$.
\end{Theorem}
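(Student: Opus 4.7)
The plan is to apply the Ambrose--Singer theorem in conjunction with the normalization properties of the canonical (normal) conformal Cartan connection. As a Lie algebra, $\u(n+1,m+1) = \su(n+1,m+1) \dsum \R J$, where $J$ is the complex structure on $\R^{2n+2,2m+2}$ spanning the one-dimensional center; the splitting is a direct sum of Lie algebras because $[\u,\u] = \su$. The projection $\pi: \u(n+1,m+1) \to \R J$ along $\su(n+1,m+1)$ is $\mathrm{Ad}(U(n+1,m+1))$-invariant and, up to a nonzero constant, coincides with the trace functional $A \mapsto \mathrm{tr}_{\R}(A \o J)$ restricted from $\End(\R^{2n+2,2m+2})$. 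Showing that the connected component of $\mathrm{Hol}(M,[g])$ lies in $SU(n+1,m+1)$ is then equivalent to showing $\pi\vert_\hol = 0$, since $\su(n+1,m+1) = \ker\pi$ is the Lie algebra of $SU(n+1,m+1)$.

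By the Ambrose--Singer theorem applied to the principal $G$-connection $\widehat\omega$ on $\widehat\Gbdle$, the holonomy algebra $\hol$ is generated by parallel-transported values $\widehat K(X,Y)$ of the curvature. Because $\iota^*\widehat K = K$ on $\Gbdle$, where $K$ is the Cartan curvature of $\omega^{[g]}$, and because $\pi$ is $\mathrm{Ad}$-equivariant under all of $U(n+1,m+1)$ (so it commutes with parallel transport), the task reduces to showing $\pi \o K_u = 0$ for every $u \in \Gbdle$. The essential structural input is the normalization of the canonical conformal Cartan connection: in the grading $\g = \g_{-1} \dsum \g_0 \dsum \g_1$ with $\g_0 = \co(p,q)$, the curvature $K$ is horizontal, has no $\g_{-1}$-component (torsion-freeness), has $\g_0$-component equal to the Ricci-trace-free Weyl tensor $\W$ valued in $\so(p,q) \subset \g_0$, and has $\g_1$-component equal to the Cotton--York tensor, all encoded by the Kostant normalization $\codiff K = 0$.

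The main technical step, which I expect to be the crux of the argument, is to verify $\mathrm{tr}_\R(K(X,Y) \o J) = 0$ pointwise. After fixing a representative metric $g \in [g]$, equivalently a Weyl structure trivializing the grading on tractor bundles, the complex structure $J \in \so(p+1,q+1)$ decomposes into components in $\g_{-1}, \g_0,$ and $\g_1$. Evaluating the trace block by block, one rewrites $\mathrm{tr}_\R(K(X,Y) \o J)$ as a sum of pairings of $\W(X,Y)$ and the Cotton--York components against the corresponding pieces of $J$. Each such pairing is a Ricci-type or divergence-type contraction of the curvature tensors, and each vanishes as a consequence of $\codiff K = 0$ (i.e.\ the standard trace-free identities of the Weyl and Cotton--York tensors). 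The hard part is to carry out this block decomposition cleanly and to identify each summand with a genuine trace that is killed by the Kostant normalization; the parity conditions $p = 2n+1$, $q = 2m+1$ enter implicitly at this stage, since they are what permit $J$ to be compatible with the tractor metric in the first place.

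Once $\pi \o K \equiv 0$ is established, Ambrose--Singer forces $\hol \subseteq \ker\pi = \su(n+1,m+1)$, and exponentiating yields the connected component of $\mathrm{Hol}(M,[g])$ inside $SU(n+1,m+1)$, completing the proof.
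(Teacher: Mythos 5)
First, a point of reference: the paper does not prove this statement at all --- it is quoted as Theorem \ref{unitary holonomy} from Leitner \cite{Felipe2} and used as a black box (to deduce Lemma \ref{semi-simple holonomy}). So there is no internal proof to compare against; your proposal has to be judged on its own merits. Your high-level strategy --- split $\u(n+1,m+1) = \su(n+1,m+1) \dsum \R J$, note that the central component of an element of $\u$ is detected by $A \mapsto \mathrm{tr}_{\R}(A \o J)$, and reduce via Ambrose--Singer to showing $\mathrm{tr}_{\R}(K(X,Y)\o J) = 0$ --- is sound and is indeed the right skeleton for a proof of Leitner's theorem.

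The gap is in the step you yourself flag as the crux, and the mechanism you propose for it is the wrong one. Fix a Weyl structure and decompose $J \in \so(p+1,q+1)$ into its graded components: a vector field $\xi$ in $\g_{-1}$, a pair $(a,\varphi)$ in $\g_0 = \R \dsum \so(p,q)$ with $\varphi$ a $2$-form, and a $1$-form $\eta$ in $\g_1$. Since $K(X,Y) = \W(X,Y) + \Cot(X,Y)$ has only $\g_0$- and $\g_1$-components, and the trace form pairs $\g_0$ with $\g_0$ and $\g_{\pm 1}$ with $\g_{\mp 1}$, the quantity $\mathrm{tr}_{\R}(K(X,Y)\o J)$ comes out, up to constants, as $\varphi^{ab}\,\W_{abXY} + \Cot_{XYa}\,\xi^a$. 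These are contractions of the Weyl and Cotton--York tensors against the \emph{components of $J$}, not metric traces of $\W$ and $\Cot$. The Kostant normalization $\codiff K = 0$ (total trace-freeness of $\W$, the divergence identities for $\Cot$) says nothing about such contractions, and for a generic $2$-form $\varphi$ and vector field $\xi$ they certainly do not vanish. What actually kills this expression is the hypothesis that $J$ is \emph{parallel}: $\nabla J = 0$ unpacks into the prolonged normal conformal Killing equations for $(\xi, a, \varphi, \eta)$, and it is the integrability (curvature) conditions of that overdetermined system --- identities of the shape $\W(X,Y)\xi = \cdots$ and $\xi \lrcorner\, \Cot = \cdots$ relating the curvature quantities to covariant derivatives of the components of $J$ --- that force the combination above to vanish. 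Your sketch uses $\nabla J = 0$ only implicitly, through the existence of a holonomy-invariant $J$, and attributes the cancellation entirely to $\codiff K = 0$; as written, the central vanishing $\mathrm{tr}_{\R}(K \o J) \equiv 0$ is therefore unproved, and the proposed route to it would fail.
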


We get the following useful lemma as a corollary (cf. Lemma 62 of \cite{altdiss} for a proof):

\begin{Lemma} \label{semi-simple holonomy} If $H = \mathrm{Hol}(M,[g])$ is a connected conformal holonomy group of a conformal manifold of signature $(p,q)$ which acts irreducibly on $\R^{p+1,q+1}$, then $H$ is semi-simple.
\end{Lemma}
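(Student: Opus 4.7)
The plan is to show that the solvable radical $\mathfrak{r} \subset \mathfrak{h} := \mathrm{Lie}(H)$ must vanish. First I pass to the complexification, viewing $V_\C := V \tens_\R \C \isom \C^{p+q+2}$ as an $\mathfrak{h}_\C$-module with complexified non-degenerate symmetric form $g_\C$. Irreducibility of $\mathfrak{h}$ on the real form $V = \R^{p+1,q+1}$ forces exactly one of two cases: either $V_\C$ is itself $\mathfrak{h}_\C$-irreducible, or $V_\C = W \dsum \bar W$ where $W$ is a proper irreducible $\mathfrak{h}_\C$-submodule and $\bar W$ its complex conjugate (noting that $W = \bar W$ would descend to a proper real $\mathfrak{h}$-submodule of $V$, and $W + \bar W$ is always defined over $\R$, hence equals $V_\C$).

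Next, Lie's theorem applied to the solvable ideal $\mathfrak{r}_\C \subset \mathfrak{h}_\C$ shows it acts by a character $\chi$ on each irreducible $\mathfrak{h}_\C$-summand. If $V_\C$ is irreducible, $\mathfrak{r}_\C$ acts on $V_\C$ as $\chi \cdot Id$; preservation of $g_\C$ gives $2\chi = 0$, so $\chi = 0$. In the split case, if $W$ is not totally isotropic, the radical of $g_\C|_W$ is a proper $\mathfrak{h}_\C$-submodule of the irreducible $W$, hence zero; so $g_\C|_W$ is non-degenerate and the same computation yields $\chi = 0$.

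The decisive case is $V_\C = W \dsum \bar W$ with $W$ totally isotropic, necessarily of $\C$-dimension $(p+q+2)/2$. Here $J : V \to V$ defined by $J|_W = i \cdot Id$, $J|_{\bar W} = -i \cdot Id$ is a $g$-compatible complex structure commuting with $\mathfrak{h}$; hence $p = 2n+1$, $q = 2m+1$, and $H \subset U(n+1,m+1) \subset O(p+1,q+1)$. I then invoke Theorem \ref{unitary holonomy} to upgrade this to $H \subset SU(n+1,m+1)$, so that $\mathfrak{h}_\C \subset \mathfrak{sl}(W)$ is traceless on $W$. The scalar $\chi(Y) \cdot Id_W$ must then satisfy $(n+m+2)\chi(Y) = 0$, again forcing $\chi = 0$.

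In every case $\mathfrak{r}_\C$ acts trivially on $V_\C$; since $H \subset O(p+1,q+1)$ is by definition faithful, $\mathfrak{h} \hookrightarrow \mathfrak{so}(p+1,q+1)$ is injective and $\mathfrak{r} = 0$, so $\mathfrak{h}$ (and hence $H$) is semi-simple. The main obstacle is the isotropic subcase: preservation of $g$ alone places no constraint on a scalar acting on a totally isotropic subspace, and Leitner's Theorem \ref{unitary holonomy} is the essential conformal-geometric input that provides the needed trace-zero condition.
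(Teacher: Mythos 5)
Your proof is correct, and it follows essentially the route the paper intends: the paper gives no written proof (it cites Lemma 62 of \cite{altdiss}) but explicitly presents the lemma as a corollary of Theorem \ref{unitary holonomy}, which is exactly the decisive input you isolate for the totally isotropic (unitary) case. Your use of the solvable radical plus Lie's theorem, rather than the more common ``irreducible implies reductive with center acting by Schur's lemma,'' is only a cosmetic variation, since for an irreducibly acting linear Lie algebra the radical coincides with the center.
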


\section{Proof of Theorem A}

The main step in proving Theorem A is the following proposition:

\begin{Proposition} \label{irreducible H} Let $H \subset O(p+1,q+1)$ be a connected, semi-simple Lie subgroup which acts irreducibly on $\R^{p+1,q+1}$ and transitively on $S^{p,q}$. Then $H$ is isomorphic to one of the groups (i)-(vii) in Theorem A. Conversely, all of those subgroups act irreducibly on $\R^{p+1,q+1}$ and transitively on $S^{p,q}$.
\end{Proposition}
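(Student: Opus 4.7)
The plan is to reduce the classification to a problem in compact group theory and then lift back to semi-simple real forms. Since $S^{p,q}$ is compact and $H$ is a connected Lie group acting transitively on it, a theorem of Montgomery produces a maximal compact subgroup $K \subset H$ which is itself transitive on $S^{p,q}$. After conjugation in $O(p+1,q+1)$, we may assume $K \subset SO(p+1) \times SO(q+1)$, the maximal compact subgroup of $SO_0(p+1,q+1)$. Pulling back along the double cover $S^p \times S^q \to S^{p,q}$, the connectedness of $K$ forces the induced $K$-action to have either one or two orbits on $S^p \times S^q$; in either case, the projections of $K$ onto $SO(p+1)$ and $SO(q+1)$ must act transitively on $S^p$ and $S^q$ respectively.

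This reduces the problem to the Borel--Montgomery--Samelson--Onishchik classification of compact Lie groups acting transitively on spheres (and on products of spheres). Up to finite quotients, each projection of $K$ must appear in the classical list: $SO(k+1)$; $U(m+1)$ and $SU(m+1)$ with $k = 2m+1$; $Sp(\ell+1)$, $Sp(\ell+1)U(1)$ and $Sp(\ell+1)Sp(1)$ with $k = 4\ell+3$; and the exceptional $Spin(7)$ ($k=7$), $Spin(9)$ ($k=15$) and $G_2$ ($k=6$). Enumerating the admissible $K \subset SO(p+1) \times SO(q+1)$ and discarding those for which $K$ cannot arise as the maximal compact subgroup of a semi-simple $H \subset O(p+1,q+1)$ acting irreducibly on $\R^{p+1,q+1}$ leaves a short list of candidates.

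For each remaining $K$, I would invoke the classification of real forms of complex semi-simple Lie algebras to identify the candidate $H$. The irreducibility condition rules out direct-product decompositions, while the signature $(p+1,q+1)$ selects the real form. This matches the representation data to the cases in Theorem A: the full $SO_0(p+1,q+1)$; $SU(n+1,m+1)$ on $\C^{n+m+2}$ viewed as real of signature $(2n+2,2m+2)$; $Sp(n+1,m+1)$ and $Sp(1)Sp(n+1,m+1)$ on the quaternionic space of signature $(4n+4,4m+4)$; the spin representation of $Spin_0(1,8)$ of real type and signature $(8,8)$; the spin representation of $Spin_0(3,4)$ of signature $(4,4)$; and the fundamental $7$-dimensional representation of $G_{2,2}$ of signature $(3,4)$.

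The converse direction requires verifying for each group (i)--(vii) that the standard representation is irreducible (classical in all cases, resting on the explicit description of the representation for (v)--(vii)) and that the action on null directions is transitive. The latter can be extracted from an Iwasawa-type decomposition $H = KAN$ compatible with the stabilizer $H \cap \widetilde{P}$ of a null line, or equivalently from the identification of each $H$ as the conformal symmetry group of a Fefferman-type model whose M\"obius sphere appears as the homogeneous boundary. The principal obstacle in my view lies in the exceptional cases (v)--(vii), where one must explicitly invoke the triality or octonionic coincidences that make a spinor or fundamental representation coincide with a standard representation of the right orthogonal group; and in ruling out spurious candidates such as $Spin(9)$-type or $U(1)$-twisted embeddings on one of the sphere factors, which requires careful dimension and signature bookkeeping to discard.
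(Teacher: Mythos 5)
Your overall strategy (pass to a maximal compact subgroup transitive on the product of spheres, classify it, then lift to real forms) is the same as the paper's, but there are several concrete gaps. First, Montgomery's theorem requires the homogeneous space to be simply connected, so you must pass to the universal cover $\tilde H$ acting on $S^p\times S^q$, and this only works for $p,q\geq 2$; the cases $q=0,1$ need a separate argument (the paper quotes the Di Scala--Olmos and Di Scala--Leistner classifications of irreducible subgroups of $O(p+1,1)$ and $O(p+1,2)$). Second, and more seriously, knowing that both projections of $K$ to $SO(p+1)$ and $SO(q+1)$ are sphere-transitive does not classify the transitive actions on $S^p\times S^q$: the non-product actions are exactly where the interesting cases live (e.g.\ $Spin(8)$ acting on $S^7\times S^7$ through its two half-spin representations, which is what produces case (v) $Spin_0(1,8)$). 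You need the full Kamerich classification of compact groups transitive on products of two spheres, not just the Borel--Montgomery--Samelson list for a single sphere, and your proposal gives no method for ruling in or out the ``diagonal'' subgroups of $SO(p+1)\times SO(q+1)$ with surjective transitive projections. Third, the claim that ``irreducibility rules out direct-product decompositions'' is false: $Sp(1)Sp(n+1,m+1)$ and several candidates the paper must consider (such as $\su(2)\oplus\sl(2,\C)_{\R}$) are products acting irreducibly via tensor-product representations.

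The most important missing idea is the final exclusion step. After the representation-theoretic bookkeeping (which the paper carries out case by case over highest weights, checking dimension, real/quaternionic type, self-duality and the symmetry and signature of the invariant form -- over a hundred cases), there remain candidates that pass \emph{every} test you list: they are semi-simple, have the right maximal compact subgroup, act irreducibly in the right dimension, and preserve a symmetric form of the right signature. Concretely, $\so(8,\C)_{\R}\subset\so(8,8)$ and the $\sl(2,\C)_{\R}$-type algebras inside $\so(4,4)$ survive all of your criteria, yet they are not on the list of Theorem A. The paper excludes them with a separate lemma showing that any subgroup with $\rho(\h)\subset\so(n,\C)\subset\so(n,n)$ fails to act locally transitively at a suitably chosen real null line in $S^{n-1,n-1}$, by exhibiting an explicit null vector $w$ for which $\rho(\h)+\mathfrak{stab}(\R w)\subsetneq\so(n,n)$. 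Your ``dimension and signature bookkeeping'' cannot detect this, because the obstruction is not representation-theoretic but a failure of transitivity of the real group on the real null cone; without some such argument the proof does not close.
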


Before proving Proposition \ref{irreducible H}, we prove a result covering the case where $H$ does not act irreducibly:

\begin{Proposition} \label{reducible H} If $H \subset O(p+1,q+1)$ is a closed subgroup which acts transitively on $S^{p,q}$ but does not act irreducibly on $\R^{p+1,q+1}$, then $H$ must be contained in a maximal compact subgroup $K \isom SO(p+1) \times SO(q+1)$. In particular, $H$ is compact and $\R^{p+1,q+1} \isom \R^{p+1} \dsum \R^{q+1}$ is decomposable as an $H$-module.
\end{Proposition}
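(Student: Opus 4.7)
The plan is to exploit the transitivity of $H$ on $S^{p,q}$ to force every proper $H$-invariant subspace of $\R^{p+1,q+1}$ to be definite, which then yields the claimed orthogonal decomposition by a short dimension count.

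First I would show that $H$ cannot preserve any nonzero totally isotropic subspace $W \subset \R^{p+1,q+1}$. For any nonzero $w \in W$, the null line $[w]$ lies in $S^{p,q}$, so by transitivity $H \cdot [w] = S^{p,q}$ and every null line is contained in $W$; since null vectors span $\R^{p+1,q+1}$ in indefinite signature, this forces $W = \R^{p+1,q+1}$, a contradiction. Now pick any proper nonzero $H$-invariant subspace $V$ (which exists by the reducibility hypothesis); its orthogonal complement $V^\perp$ is also $H$-invariant because $H$ preserves the bilinear form. Applying the preceding observation to the $H$-invariant isotropic subspace $V \cap V^\perp$ shows that $V$ is non-degenerate, giving an $H$-invariant orthogonal splitting $\R^{p+1,q+1} = V \dsum V^\perp$. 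Running essentially the same transitivity argument once more, $V$ cannot contain a nonzero null vector, for otherwise $\mathbb{P}(V) \cap S^{p,q}$ would be a nonempty $H$-invariant subset of $S^{p,q}$, which by transitivity would equal all of $S^{p,q}$, forcing $V = \R^{p+1,q+1}$ and contradicting properness. Hence $V$ is definite, and by the same reasoning so is $V^\perp$.

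Two definite summands whose signatures add up to $(p+1,q+1)$ must consist of one positive definite piece of dimension $p+1$ and one negative definite piece of dimension $q+1$. Therefore $H$ is contained in $O(V) \times O(V^\perp) \isom O(p+1) \times O(q+1)$, a maximal compact subgroup of $O(p+1,q+1)$ whose identity component is $K \isom SO(p+1) \times SO(q+1)$; in particular $H$ is compact and $\R^{p+1,q+1} \isom \R^{p+1} \dsum \R^{q+1}$ is a decomposition of $H$-modules, as required. The only real obstacle is the conceptual one of recognising that transitivity on $S^{p,q}$ together with the spanning property of the null cone rules out both totally isotropic invariant subspaces and non-degenerate indefinite invariant subspaces; once this dichotomy is in place, the rest is routine linear algebra.
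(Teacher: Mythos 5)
Your proof is correct and follows essentially the same route as the paper: both arguments hinge on the observation that transitivity on $S^{p,q}$ forces any invariant subspace meeting the null cone $\mathcal{N}$ nontrivially to contain all of $\mathcal{N}$, which spans $\R^{p+1,q+1}$, so every proper invariant subspace is definite and the orthogonal splitting follows. Your preliminary step about totally isotropic subspaces is subsumed by the later argument that $V$ contains no nonzero null vector (the paper goes directly to that statement), but this is only a cosmetic difference.
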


\begin{proof} Let $V \subset \R^{p+1,q+1}$ be a non-trivial $H$-invariant subspace. Then $V \cap \mathcal{N} = \{ 0 \}$. Otherwise, we would have $\mathcal{N} \subset V$ by transitivity of $H$ on $S^{p,q} = \mathbb{P}(\mathcal{N})$, since $V$ is a subspace. But $\mathcal{N} \backslash \{ 0 \}$ is a \emph{full} submanifold of $\R^{p+1,q+1}$, being the orbit of a point under the action by $O(p+1,q+1)$, which acts irreducibly on $\R^{p+1,q+1}$ (cf. e.g. Prop. 4 of \cite{DiLeist}). This means $\mathcal{N}$ is not contained in any proper affine subspace of $\R^{p+1,q+1}$. Therefore, since $V$ contains no non-zero null vectors, the restriction of the metric to $V$ is definite, in particular non-degenerate, so we get a $H$-invariant decomposition $\R^{p+1,q+1} = V \dsum V^{\perp}$. Similarly, the restriction of the metric to $V^{\perp}$ must also be definite, so we must have $V \dsum V^{\perp} \isom \R^{p+1} \dsum \R^{q+1}$, which shows that $H \subset K \isom SO(p+1) \times SO(q+1)$.\end{proof}

\no \emph{Proof of Proposition \ref{irreducible H}.} We consider three cases: $p \geq 3, q = 0$ (corresponding to Riemannian signature); $p \geq 2, q = 1$ (corresponding to Lorentzian signature); and $p,q \geq 2$. In the first two cases, Proposition \ref{irreducible H} is already a corollary of Theorem 1.1 of \cite{DiOlm} and Theorem 1 of \cite{DiLeist}, respectively. The first of these results, already mentioned in the Introduction, says that the only connected subgroup of $O(p+1,1)$ acting irreducibly on $\R^{p+1,1}$ is $SO_0(p+1,1)$. The second result says that the only connected subgroups of $O(p+1,2)$ acting irreducibly on $\R^{p+1,2}$ are isomorphic to: $SO_0(p+1,2)$ for general $p$; $SU(n+1,1)$, $U(n+1,1)$ or $S^1 \cdot SO_0(n+1,1)$ for $p=2n+1$ odd; and $SO_0(2,1)_i \subset O(3,2)$ for $p=2$. Of the semi-simple groups from this classification, $SO_0(2,1)_i \subset O(3,2)$ does not act transitively on $S^{2,1}$, because there are null lines in $S^{2,1}$ where it does not act locally transitively, cf. Appendix A.1 of \cite{DiLeist}.\\

So from now on, we can restrict consideration to connected, semi-simple subgroups $H \subset G := SO_0(p+1,q+1)$ which act irreducibly on $\R^{p+1,q+1}$ and transitively on $S^{p,q}$, with $p,q \geq 2$. Our strategy for proving that $H$ must be among the list claimed in Theorem A under these assumptions is as follows: First we show that a maximal compact subgroup $K$ of $H$ (or some cover of $K$) must act transitively and effectively on the product of spheres $S^p \times S^q$. A result of B. Kamerich gives a list of all possibilities for $K$ or its covering group (cf. Theorem \ref{compact transitive list} below). Since $H$ is semi-simple, we can use this list and the standard tables giving maximal compact subgroups of simple Lie groups to enumerate the possible groups $H$ which could occur. Then, working at the Lie algebra level, we use standard methods from representation theory to exclude all but a few of these possibilities by the criteria that $\h$ must have an irreducible real representation of dimension $p+q+2$ which preserves a metric of signature $(p+1,q+1)$ (cf. Lemma \ref{reducing the possibilities}). For the remaining cases, those which do not occur in the list of Theorem A are excluded by a simple additional argument.\\

First, note that under the current assumptions on $H$, its universal cover $\tilde{H}$ must act transitively on $S^p \times S^q$ (which is the universal cover of $S^{p,q}$, as noted in Section 2) by a standard result on transitive group actions (cf. e.g. Proposition 6 in Chapter 1 of \cite{Oni}). Hence, all maximal compact subgroups $\tilde{K} \subset \tilde{H}$ act transitively on $S^p \times S^q$ as a result of the following well-known proposition:

\begin{Proposition} (Montgomery, \cite{Mont}) If $H$ is a connected Lie group which acts transitively on a compact, simply connected manifold $\mathbf{X}$, then all maximal compact subgroups $K \subset H$ also act transitively on $\mathbf{X}$.
\end{Proposition}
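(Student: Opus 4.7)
The plan is to reduce the transitivity of $K$ on $\mathbf{X}$ to a dimension count, using the Iwasawa–Malcev–Mostow structure theorem for connected Lie groups. Fix $x_0 \in \mathbf{X}$ and let $H_0 = \mathrm{Stab}_H(x_0)$, so $\mathbf{X} \approx H/H_0$. The first observation is that $H_0$ must be connected: in the homotopy long exact sequence of the fibration $H_0 \hookrightarrow H \to \mathbf{X}$, one has $\pi_1(\mathbf{X}) \to \pi_0(H_0) \to \pi_0(H)$, and $\pi_1(\mathbf{X}) = \pi_0(H) = 0$ forces $\pi_0(H_0) = 0$.

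Next, by Iwasawa–Malcev, $H$ contains a maximal compact subgroup $K$ (any two are conjugate) and is diffeomorphic to $K \times \R^n$; similarly, $H_0$ is diffeomorphic to $K_0 \times \R^m$ for any maximal compact $K_0 \subset H_0$. By Mostow's theorem every compact subgroup of $H$ is conjugate into $K$, so after adjusting the basepoint one may assume $K_0 \subseteq K$. Since $K \cap H_0$ is a compact subgroup of $H_0$ that contains $K_0$, maximality of $K_0$ gives $K \cap H_0 = K_0$, so the $K$-orbit of $x_0$ is the embedded submanifold $K/K_0 \hookrightarrow \mathbf{X}$.

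The heart of the argument — and the main technical step — is showing $\dim(K/K_0) = \dim \mathbf{X}$, equivalently $n = m$. The cleanest way is to compare the homotopy long exact sequences of the fibrations $K_0 \hookrightarrow K \to K/K_0$ and $H_0 \hookrightarrow H \to \mathbf{X}$. Since $H/K \approx \R^n$ and $H_0/K_0 \approx \R^m$ are contractible, the inclusions $K \hookrightarrow H$ and $K_0 \hookrightarrow H_0$ are homotopy equivalences; the five-lemma then forces $K/K_0 \hookrightarrow \mathbf{X}$ to be a weak homotopy equivalence. Both spaces are closed manifolds, so this equivalence equates their dimensions. The compact submanifold $K/K_0 \subseteq \mathbf{X}$ is therefore simultaneously closed and (by dimension) open, hence all of $\mathbf{X}$ by connectedness. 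This shows $K \cdot x_0 = \mathbf{X}$, so $K$ acts transitively, and since all maximal compact subgroups of $H$ are conjugate, the conclusion extends to every maximal compact subgroup.
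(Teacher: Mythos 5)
Your proof is correct, but note that the paper does not actually prove this proposition: it is quoted verbatim from Montgomery's 1950 paper \cite{Mont} and used as a black box, so there is no in-paper argument to compare against. Your argument is a clean, self-contained rendering in the spirit of Montgomery's original: the homotopy exact sequence of $H_0\hookrightarrow H\to\mathbf{X}$ together with $\pi_1(\mathbf{X})=\pi_0(H)=0$ gives connectedness of the stabilizer, the Iwasawa--Malcev--Mostow decomposition makes $K\hookrightarrow H$ and $K_0\hookrightarrow H_0$ deformation retracts with $K\cap H_0=K_0$, and comparing the two fibration sequences shows the orbit $K/K_0\hookrightarrow\mathbf{X}$ is a weak homotopy equivalence, hence (top nonzero $\mathbb{Z}/2$-homology of a closed manifold detecting dimension) an open and closed subset of the connected space $\mathbf{X}$. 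Two small points deserve a word more care if this were written out in full: the ``five lemma'' step involves the tail of the sequence where $\pi_0$ is only a pointed set, so one should note explicitly that $\pi_1(\mathbf{X})=0$ and $\pi_0(H_0)=\pi_0(K_0)=\ast$ force $\pi_1(K_0)\to\pi_1(K)$ to be surjective and hence $\pi_1(K/K_0)=0$, after which the genuine five lemma applies in degrees $\geq 2$; and the basepoint adjustment placing $K_0$ inside $K$ should be phrased as conjugating by the element furnished by Mostow's conjugacy theorem, which is harmless since transitivity of one $K$-orbit suffices and all maximal compacts are conjugate. Neither is a gap; both hypotheses (compactness and simple connectivity of $\mathbf{X}$) are used exactly where they must be.
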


Clearly, a maximal compact subgroup $\tilde{K} \subset \tilde{H}$ is a covering space of some maximal compact subgroup $K \subset H$. Recall that a group action $a: H \times \mathbf{X} \rightarrow \mathbf{X}$ is called \emph{effective} if the subgroup of elements of $H$ which act by the identity on $\mathbf{X}$ consists of only the identity element $e \in H$. Since $K \subset H$ is compact and $H$ is closed in $G$, $K$ is a compact subgroup of $G$ and hence contained in one of its maximal compact subgroups, which are all isomorphic to $SO(p+1) \times SO(q+1)$. The latter is known to act effectively (and transitively) on $S^p \times S^q$, and thus it follows that the elements of $\tilde{K}$ which act on $S^p \times S^q$ by the identity form at most a (discrete) subgroup of the Galois group of the covering $\tilde{K} \rightarrow K$. In particular, for every compact subgroup $K \subset H$, some covering space of $K$ must act transitively and effectively on $S^p \times S^q$. This allows us to apply the following classification result on compact groups acting transitively on $S^p \times S^q$, due to B. Kamerich (cf. \cite{Oni} for a discussion and proof; the list given here is derived from Ch. 5, Theorem 6 of that reference using the centralizers to give all groups acting transitively and effectively, cf. Prop. 3.6 and relevant tables in \cite{Kramer}):

\begin{Theorem} \label{compact transitive list} (Kamerich, \cite{Kam}) If $K$ is a connected, compact Lie group acting transitively and effectively on $S^p \times S^q$ for $p,q \geq 2$, then $K$ and $S^p \times S^q$ are isomorphic to one of the following:\\
\no List (I):

(a) $K = SU(4)$ acting on $S^5 \times S^7$;

(b) $K = Spin(8)$ acting on $S^7 \times S^7$;

(c) $K = Spin(7)$ or $U(1)Spin(7)$ acting on $S^6 \times S^7$;

(d) $K = Spin(8)$ acting on $S^6 \times S^7$;

(e) $K = SO(8)$ or $U(1)SO(8)$ acting on $S^6 \times S^7$.

\no List (II), where $K = K_1 \times K_2$ and the $K_i$ acting transitively on $S^{p_i}$ for $p_1 := p$, $p_2 := q$, are:

(a) $K_i = SO(p_i+1)$ acting on $S^{p_i}$;

(b) $K_i = SU(\frac{p_i + 1}{2})$ or $U(\frac{p_i + 1}{2})$ acting on $S^{p_i}$;

(c) $K_i = Sp(\frac{p_i+1}{4})$ or $Sp(1)Sp(\frac{p_i+1}{4})$ acting on $S^{p_i}$;

(d) $K_i = Spin(9)$ acting on $S^{15}$;

(e) $K_i = Spin(7)$ acting on $S^7$;

(f) $K_i = G_2$ acting on $S^6$.

\no List (III):

(a) $K = SU(2)SU(2), SU(2)U(2)$ or $U(2)U(2)$ acting on $S^3 \times S^2$;

(b) $K = SU(\frac{p+1}{2})SU(2), SU(\frac{p+1}{2})U(2)$ or $U(\frac{p+1}{2})U(2)$ acting on $S^p \times S^2$;

(c) $K = Sp(\frac{p+1}{4})SU(2), Sp(\frac{p+1}{4})U(2)$ or $Sp(1)Sp(\frac{p+1}{4})U(2)$ acting on $S^p \times S^2$;

(d) $K = Sp(\frac{p+1}{4})SU(3)$ or $Sp(\frac{p+1}{4})U(3)$ acting on $S^p \times S^5$;

(e) $K = Sp(\frac{p+1}{4})Sp(2)$ or $K = Sp(1)Sp(\frac{p+1}{4})Sp(2)$ acting on $S^p \times S^7$.
\end{Theorem}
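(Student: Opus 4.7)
The plan is to combine the classical Borel--Montgomery--Samelson classification of transitive compact Lie group actions on a single sphere with a careful analysis of how such actions can be combined on a product $S^p \times S^q$. The central tool is the structure theorem for compact Lie groups: $K$ is an almost direct product of its connected center $Z(K)^0$ with its simple factors $K_1, \ldots, K_r$, each of which acts on $S^p \times S^q$, and the orbit structures of these normal subgroups interact rigidly with any $K$-invariant geometric structure on the product.

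First I would fix a $K$-invariant Riemannian metric on $S^p \times S^q$ (available by averaging since $K$ is compact) and analyze the orbits of each simple factor $K_i$ and of $Z(K)^0$. For $p,q \geq 2$, the two sphere factors admit a characterization via the homogeneous Riemannian structure, as integral leaves of the parallel distributions given by the de Rham decomposition (with some care about ambiguity when $p=q$). This allows me to argue that each simple factor $K_i$ either has all orbits contained in single fibers of one of the two projections $S^p \times S^q \to S^{p_j}$, or else projects surjectively onto both $S^p$ and $S^q$. This dichotomy separates the classification into two regimes.

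Second, if no simple factor surjects onto both spheres, then after regrouping factors $K$ has the form $K_1 \times K_2$ up to finite cover, with each $K_j$ acting transitively on the single sphere $S^{p_j}$. Applying the Borel--Montgomery--Samelson list to each factor ($SO(n+1)$ on $S^n$; $SU(k)$ or $U(k)$ on $S^{2k-1}$; $Sp(k)$ or $Sp(1)Sp(k)$ on $S^{4k-1}$; $Spin(7)$ on $S^7$; $Spin(9)$ on $S^{15}$; $G_2$ on $S^6$) yields list (II). The list (III) cases arise similarly, but allow a small additional central factor (e.g. $U(1)$, $U(2)$ or $U(3)$) coupling the two sides while preserving effectiveness; these I would enumerate by imposing the dimension constraint $\dim K - \dim L = p + q$ on the isotropy $L$ and reading off admissible centralizers from the tables of \cite{Kramer}.

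Third, if some simple factor $K_i$ surjects onto both spheres, then $K_i$ admits transitive actions on both $S^p$ and $S^q$, which is a very restrictive coincidence. Inspection of the Borel--Montgomery--Samelson list shows $K_i$ must be one of $SU(4) \isom Spin(6)$ (transitive on both $S^5$ and $S^7$), $Spin(7)$ (transitive on $S^6$ via its $G_2$-subgroup and on $S^7$ via the spin representation), or $Spin(8)$ (using the two half-spin representations related by triality, together with the $SO(8)$ quotient action on $S^6$), producing list (I). The main obstacle will be precisely this third step combined with the fine-grained analysis of list (III): ruling out spurious candidates and verifying effectiveness requires explicit case-by-case computation of isotropy subgroups, centralizers, and dimension counts, rather than a purely structural argument. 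I would organize this finite check by first enumerating simple-group/representation coincidences on single spheres, then attaching the minimal central extensions compatible with effectiveness and dimension, and finally cross-referencing with \cite{Kramer} to confirm that no further possibilities are overlooked.
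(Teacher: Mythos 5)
The paper does not prove this statement at all: it is imported verbatim from Kamerich's thesis, with the ``effective'' refinement assembled from Onishchik (Ch.~5, Theorem~6 of \cite{Oni}) and the centralizer tables in \cite{Kramer}. So the only question is whether your blind sketch would actually close, and it would not, for a structural reason. Your entire argument is organized around the premise that a transitive $K$-action on $S^p\times S^q$ is compatible with the product structure --- first via the de~Rham splitting of an averaged invariant metric, then via the dichotomy ``each simple factor either has orbits inside fibres of one projection or surjects onto both factors.'' This premise fails already for entry (I)(e): $SO(8)$ acts transitively on the Stiefel manifold $V_2(\R^8)=SO(8)/SO(6)$, which is diffeomorphic to $S^6\times S^7$ only because $S^7$ is parallelizable (it is the unit tangent bundle of $S^7$), and $SO(8)$ admits \emph{no} transitive action on $S^6$ whatsoever (the effective quotient would have to be $SO(8)$ or $PSO(8)$, while Montgomery--Samelson--Borel allows only $SO(7)$ and $G_2$ on $S^6$). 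Consequently there is no $SO(8)$-invariant product structure, no invariant product metric whose de~Rham factors are the spheres, and no equivariant second projection; your step three, which infers from ``a simple factor surjects onto both spheres'' that it ``admits transitive actions on both $S^p$ and $S^q$,'' is false here, and entries (I)(d) and (I)(e) would simply be lost. The machinery that replaces these steps in the actual proof is Onishchik's theory of homogeneous fibrations and factorizations $G=G'G''$ of compact Lie groups (equivalently, the determination of which pairs $(K,L)$ give $K/L$ the rational cohomology of a product of two spheres); that is what Kamerich's classification rests on, and it cannot be substituted by Riemannian product-geometry arguments.

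A second, smaller but still substantive misreading: List (III) does not consist of ``product actions plus a small additional \emph{central} factor.'' The coupling subgroups there --- $Sp(1)$, $SU(3)$, $Sp(2)$, besides the circles inside $U(k)$ --- are simple and non-central; in (III)(c), for instance, the $Sp(1)$ acts on $S^{4k-1}\subset\H^k$ by right quaternionic scalar multiplication while acting transitively on $S^2$ through $SO(3)$. For $k>1$ its orbits are neither contained in a fibre of either projection nor surjective onto both spheres, so these factors fall through the cracks of your dichotomy and your step two would not generate List (III) as described. Your identification of the single-sphere transitive groups is correct, and your instinct that the residue is a finite check against the tables of \cite{Kramer} matches how the effective list is actually compiled, but without the fibration/factorization input the sketch has genuine gaps in exactly the places where List (I) and List (III) are produced.
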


We can now use this to limit the possible Lie algebras $\h$ of our irreducible transitive semi-simple subgroup $H \subset SO_0(p+1,q+1)$:

\begin{Lemma} \label{reducing the possibilities} Let $\h$ be a semi-simple Lie algebra with maximal compact subalgebra $\kalg \subset \h$ isomorphic to the Lie algebra of one of the compact Lie groups $K$ in the lists (I)-(III) of Theorem \ref{compact transitive list}. If $\rho: \h \rightarrow \gl(V)$ is a faithful real irreducible representation of dimension $p+q+2$ for the integers $p,q$ corresponding to $K$, and such that $\rho(\h) \subset \so(V,g)$ for some non-degenerate, symmetric bilinear form $g$ of signature $(p+1,q+1)$, then $\h$, $\kalg$ and $\rho$ are, up to isomorphism, one of the following, with the given restrictions on $p, q$:

$\mathbf{(I_{b,i})}$ $\h = \so(1,8)$, $\rho = \tau_0(\lambda_4)$, $\kalg = \so(8)$ and $p=q=7$;

$\mathbf{(I_{b,ii})}$ $\h = \so(8,\C)_{\R}$, $\rho \in \{ \tau_{\R}(1 \tens \lambda_1), \tau_{\R}(1 \tens \lambda_3), \tau_{\R}(1 \tens \lambda_4) \}$, $\kalg = \so(8)$ and $p=q=7$;

$\mathbf{(II_{a,a})}$ $\h = \so(p+1,q+1)$, $\rho = \tau_0(\la_1)$, $\kalg = \so(p+1) \dsum \so(q+1)$ and $p,q \geq 2$;

$\mathbf{(II_{a,b,i})}$ $\h = G_{2,2}$, $\rho = \tau_0(\la_1)$, $\kalg = \so(3) \dsum \su(2)$ and $p=3,q=2$;

$\mathbf{(II_{a,b,ii})}$ $\h = \so(3,4)$, $\rho = \tau_0(\la_3)$, $\kalg = \so(4) \dsum \su(2)$ and $p=q=3$;

$\mathbf{(II_{b,b,i})}$ $\h = \su(n+1,m+1)$, $\rho = \tau_{\R}(\la_1)$, $\kalg = \su(n+1) \dsum \u(m+1)$ and $p=2n+1,q=2m+1 \geq 3$;

$\mathbf{(II_{b,b,ii})}$ $\h = \su(2) \dsum \sl(2,\C)_{\R}$, $\rho = \tau_{\R}(\la_1 \tens (1 \tens \la_1))$, $\kalg = \su(2) \dsum \su(2)$ and $p=q=3$;

$\mathbf{(II_{b,b,iii})}$ $\h = \sl(2,\C)_{\R} \dsum \sl(2,\C)_{\R}$, $\rho = \tau_{\R}((1 \tens \la_1) \tens (1 \tens \la_1))$, $\kalg = \su(2) \dsum \su(2)$ and $p=q=3$;

$\mathbf{(II_{c,c,i})}$ $\h = \sp(n+1,m+1)$, $\rho = \tau_{\R}(\la_1)$, $\kalg = \sp(n+1) \dsum \sp(m+1)$ and $p=4n+3,q=4m+3 \geq 3$;

$\mathbf{(II_{c,c,ii})}$ $\h = \sp(1) \dsum \sp(n+1,m+1)$, $\rho = \tau_0(\la_1 \tens \la_1)$, $\kalg = \sp(1) \dsum \sp(n+1) \dsum \sp(m+1)$ and $p=4n+3,q=4m+3 \geq 3$.
\end{Lemma}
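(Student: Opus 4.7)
\emph{Proof proposal for Lemma \ref{reducing the possibilities}.} The plan is a systematic case-by-case analysis running through Kamerich's list (I)--(III) from Theorem \ref{compact transitive list}. For each entry $K$ with Lie algebra $\kalg$, I would proceed in three stages: (a) enumerate all real semi-simple Lie algebras $\h$ (up to isomorphism) whose maximal compact subalgebra is isomorphic to $\kalg$; (b) for each candidate $\h$, enumerate its faithful real irreducible representations of dimension $n := p+q+2$; (c) for each surviving pair $(\h,\rho)$, check that $\rho$ preserves a symmetric bilinear form of signature $(p+1,q+1)$, and discard those that do not.

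Stage (a) uses Cartan's classification of real forms of complex semi-simple Lie algebras (for example as tabulated in Helgason or Onishchik--Vinberg), taking into account that $\h$ can be either a non-compact real form of some complex simple $\h^{\C}$ with Cartan decomposition $\h = \kalg \dsum \mathfrak{p}$, or a complex simple Lie algebra viewed as real (in which case $\kalg$ is its underlying compact real form). For stage (b) I would use the classification of complex irreducibles of $\h^{\C}$ by dominant weights, together with the real/complex/quaternionic type of each complex irrep, which governs whether the associated real irrep has dimension $\dim_{\C}$ or $2\dim_{\C}$. When $\h = \h_1 \dsum \h_2$ is not simple, irreducibles of $\h$ decompose as tensor products of irreducibles of each summand, so the dimensional constraint combined with Weyl's dimension formula quickly restricts candidates. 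Stage (c) uses the fact that the existence of an invariant symmetric form is equivalent to $\rho$ being of real orthogonal type (readable off the type classification and the known symmetry of the complex invariant bilinear form), while the signature is determined by the $\kalg$-isotypic decomposition of $V$: since $\kalg$ is compact and acts irreducibly on each isotypic piece with a definite invariant form, the global signature is essentially the sum, with signs, of the dimensions of the $\kalg$-irreducible summands of $V$. In the typical case, $V$ splits under $\kalg$ into two pieces of dimensions $p+1$ and $q+1$ on which the form has opposite definiteness, yielding the required signature $(p+1,q+1)$ automatically.

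The main obstacle will be the bookkeeping rather than any deep point: each entry of Kamerich's list admits several candidate real forms $\h$, and for each one a short list of low-dimensional irreducibles has to be inspected. What makes the analysis tractable is the stringent dimension constraint $n = p+q+2$, which is small relative to generic non-trivial representation dimensions of the candidate $\h$, so Weyl's formula and standard tables of small-dimensional representations dispose of most cases at once. The more delicate sub-cases are those of low rank (especially $p,q \leq 3$), where accidental isomorphisms produce several real forms of comparable size and multiple candidate representations compete; these are exactly the sub-cases that account for the exceptional entries such as $\mathbf{(II_{a,b,ii})}$, $\mathbf{(II_{b,b,ii})}$ and $\mathbf{(II_{b,b,iii})}$ in the final list.
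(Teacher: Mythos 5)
Your proposal follows essentially the same route as the paper: enumerate the non-compact real forms $\h$ with the prescribed maximal compact $\kalg$ from Cartan's tables, list the candidate irreducible representations of dimension $p+q+2$ via highest weights and their real/complex/quaternionic type (the paper's Type I/Type II dichotomy), and then filter by the existence of an invariant symmetric form of the correct signature, with the signature read off from the $\kalg$-decomposition of $V$. Like the paper, you leave the (admittedly massive) case-by-case bookkeeping unexecuted --- the paper itself defers this to external notes --- so the proposal matches the published argument in both strategy and level of detail.
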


\begin{proof} The idea of the proof is simple enough: For each compact group $K$ in the lists of Theorem \ref{compact transitive list}, we can consult the standard tables on real simple Lie algebras (cf. e.g. Appendix C, Section 3, of \cite{Knapp}) to determine the semi-simple Lie algebras $\h$ which have maximal compact sub-algebra isomorphic to $\kalg$; for each such $\h$, we use the techniques from representation theory of semi-simple Lie algebras to determine whether it admits a faithful irreducible representation $\rho$ of the appropriate dimension $d=p+q+2$, and whether $\rho$ preserves a non-degenerate symmetric bilinear form of signature $(p+1,q+1)$. When finished checking these criteria for all possibilities, we are left with precisely the above list.\\

In practice, this is an extremely tedious calculation, involving carrying out simple verifications with weights, but for over one hundred different cases. For this reason, we only outline the steps here and record the details for all the cases separately in \cite{transhol-details}.\\

First, note that $\h$ can never be compact. This follows from the well-known fact that every linear representation $R: K \rightarrow Gl(V)$ of a compact Lie group $K$ admits an invariant positive-definite symmetric bilinear form. But using a variation on the argument in the proof of Schur's Lemma, it is easy to prove that if a finite-dimensional irreducible module $V$ admits an invariant positive-definite metric, then it has no invariant metric of indefinite signature. (In fact, a strengthening is possible, cf. Theorem 3 of \cite{DiLeistNeuk}: If $H \subset Gl(V)$ acts irreducibly, then the space of $H$-invariant symmetric bilinear forms which are not of neutral signature is at most one-dimensional.)\\

Next, for each of the non-compact semi-simple $\h$ with maximal compact sub-algebra $\kalg$ on the list, note that in general the real irreducible representations $\rho: \h \rightarrow \gl(V)$ of dimension $d$ are divided into the following types: those for which the complexification $\rho(\C): \h \rightarrow \gl(V(\C))$ is irreducible (Type I); and those which are the underlying real representation of an irreducible complex representation $\tau: \h \rightarrow \gl_{\C}(W)$, i.e. $(\rho,V) = (\tau_{\R},W_{\R})$ (Type II). Thus we first need to determine, for each $\h$, the sets of faithful complex irreducible representations of (complex) dimensions $d$ and $d/2$ -- denoted $C_d(\h)$ and $C_{d/2}(\h)$, respectively. The complex irreps of $\h$ are determined up to isomorphism by a highest weight $\Lambda$ for the complexification $\h(\C)$, and information about various invariants including dimension can be computed from these weights and are collected in tables in the literature. In particular, the relevant non-empty sets $C_d(\h)$ and $C_{d/2}(\h)$ are obtained with the help of information gathered in Table 5 of \cite{OniVin} and 4.10-4.26 of \cite{Kramer}.\\

For each $\tau = \tau(\Lambda) \in C_d(\h)$ we must verify that $\tau$ corresponds to a real irrep of Type I, i.e. $\tau = \rho(\C)$ which we denote by $\rho = \tau_0 = \tau_0(\Lambda)$. This amounts by standard results to checking that $\tau$ is self-conjugate and has a real structure, and these properties can in turn be determined from the highest weight $\Lambda$. (For details, cf. e.g. Theorem 1 of \cite{Iwahori}, Sec. 3 of Reference Chapter in \cite{OniVin}, 2.3.14-2.3.15 of \cite{CSbook}.) In addition, for these $\tau$ we must have $\tau(\h(\C)) \subset \so(d,\C) = \so(p+1,q+1) \tens \C$ as a necessary criteria for the condition $\rho(\h) \subset \so(p+1,q+1)$. This corresponds to the criteria that $\tau$ is self-dual and that the non-degenerate bilinear form it preserves is symmetric, which can also be determined from $\Lambda$, cf. Exercises 4.3.5-4.3.13 of \cite{OniVin}.\\

Finally, on the other hand for each $\tau = \tau(\Lambda) \in C_{d/2}(\h)$ we must verify that $\tau$ corresponds to a real irrep of Type II, i.e. $\rho = \tau_{\R}$. This is the condition that either $\tau$ is not self-conjugate or, if it is self-conjugate, that it admits no real structure (i.e. the index must be $-1$), and these criteria are also checked via the highest weight $\Lambda$.\\

After testing these criteria for all possibilities in \cite{transhol-details}, we are left with only those possibilities listed in the statement of the lemma. The fundamental weights $\Lambda$ are indicated in terms of a basis of fundamental weights $\{ \la_1,\ldots,\la_k \}$ of the simple factors of the complexification $\h(\C)$ (where $1$ indicates a trivial representation), with a tensor symbol used to indicate taking a tensor product of representations corresponding to the highest weights of the simple factors of $\h(\C)$. Note that it is also indicated whether $\rho$ is of Type I or Type II, namely whether $\rho = \tau_0(\Lambda)$ or $\rho = \tau_{\R}(\Lambda)$, respectively. \end{proof}

From Lemma \ref{reducing the possibilities} the only cases which have to be dealt with in order to establish Theorem A are $\mathbf{(I_{b,ii})}$, $\mathbf{(II_{b,b,ii})}$ and $\mathbf{(II_{b,b,iii})}$. These $\rho$ are all of Type II, that is $\rho$ is the underlying real representation of some complex irreducible representation $\tau$, and for the $\tau$ in each case we can check using Exercises 4.3.5-4.3.13 of \cite{OniVin} that $\tau$ is self-dual and orthogonal, in particular we have $\tau(\h) = \rho(\h) \subset \so(d/2,\C)$ in all these cases. Note also that $p=q$ in these cases. Hence, we can apply the following result to exclude these remaining cases:

\begin{Lemma} \label{excluding so(n,C)} Let $H$ be a connected Lie group and $R: H \rightarrow Gl(V)$ a real representation of even dimension $2n$ with infinitesimal $\rho: \h \rightarrow \so(n,n) \subset \gl(V)$. If $\rho(\h) \subset \so(n,\C) \subset \so(n,n)$, then the induced action of $H$ on the M\"obius sphere $S^{n-1,n-1}$ is not transitive.
\end{Lemma}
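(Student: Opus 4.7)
The plan is to exploit the fact that elements of $\so(n,\C)$ preserve strictly more structure than $g$ alone: they also preserve a complex bilinear form $B$ on $\C^n \isom \R^{2n}$, the imaginary part of which is a second $H$-invariant real symmetric bilinear form $\omega$, and it is $\omega$ that will obstruct transitivity on $\mathbb{P}(\mathcal{N}) = S^{n-1,n-1}$.

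First I would identify $\R^{2n}$ with $\C^n$ and take $B(z,w) = \sum_j z_j w_j$, the standard $\C$-bilinear form whose complex isometry algebra is exactly $\so(n,\C)$. Writing $z = x+iy$, $w = u+iv$, decompose $B = g' + i\omega$ with $g'(z,w) = \sum(x_j u_j - y_j v_j)$ and $\omega(z,w) = \sum(x_j v_j + y_j u_j)$. A direct check shows that $g'$ is non-degenerate of signature $(n,n)$ and that $\omega$ is symmetric. Since any two embeddings $\so(n,\C) \hookrightarrow \so(n,n)$ are conjugate in $O(n,n)$, after replacing $R$ by a conjugate we may assume $g = g'$.

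Next, since $\rho(\h)$ preserves $B$ it preserves both $g$ and $\omega$, and connectedness of $H$ then gives that $R(H)$ preserves $\omega$. On the null cone $\mathcal{N} = \{v : g(v,v)=0\}$, rescaling $v \mapsto \lambda v$ with $\lambda \in \R \setminus \{0\}$ multiplies $\omega(v,v)$ by $\lambda^2 > 0$, so the map $[v] \mapsto \mathrm{sign}\,\omega(v,v) \in \{-,0,+\}$ descends to a well-defined $H$-invariant function on $\mathbb{P}(\mathcal{N}) = S^{n-1,n-1}$.

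To finish, I would exhibit explicit null vectors realising each of the three values: $v_+ = (1+i)e_1$ gives $B(v_+,v_+) = 2i$, so $g(v_+,v_+) = 0$ and $\omega(v_+,v_+) = 2$; $v_- = (1-i)e_1$ gives $\omega(v_-,v_-) = -2$; and $v_0 = e_1 + ie_2$ gives $B(v_0,v_0) = 0$, so $\omega(v_0,v_0) = 0$. Hence $S^{n-1,n-1}$ breaks into at least three non-empty $H$-invariant pieces and $H$ cannot act transitively. There is no serious obstacle here; the only point requiring care is the well-definedness of the sign invariant after \emph{real} projectivisation, which is precisely why the argument uses the factor $\lambda^2 > 0$ rather than the complex structure $J$ itself (which does not preserve $\mathcal{N}$).
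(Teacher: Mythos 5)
Your proof is correct, but it is a genuinely different argument from the one in the paper. The paper works infinitesimally: it fixes the particular null line $\ell = \R(ze_1 + \bar z e_n)$ with $z = (1+i)/\sqrt{2}$, writes the invariant neutral form as $B = \alpha\,\mathrm{Re}\langle\cdot,\cdot\rangle + \beta\,\mathrm{Im}\langle\cdot,\cdot\rangle$ (citing Prop.~5 of Di Scala--Leistner--Neukirchner), and then shows by an explicit matrix computation in an adapted basis that $\rho(\h) + \mathfrak{stab}(\ell) \subsetneq \so(V,B)$, i.e.\ the orbit of $\ell$ is not even open. You instead argue globally: the second invariant symmetric form $\mathrm{Im}(B)$ restricts to the $g$-null cone to give a sign-valued invariant that descends to real projectivization (since rescaling multiplies it by $\lambda^2>0$), and your three test vectors $(1\pm i)e_1$ and $e_1+ie_2$ show this invariant is non-constant, so $S^{n-1,n-1}$ splits into at least three invariant pieces. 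Your version is cleaner and yields slightly more (a lower bound on the number of invariant subsets, not just failure of local transitivity at one point), while the paper's version avoids any normalization of the invariant form. The one point you should tighten is the normalization ``we may assume $g = \mathrm{Re}(B)$'': rather than appealing to conjugacy of embeddings $\so(n,\C)\hookrightarrow\so(n,n)$, either invoke the fact (Prop.~5 of the reference above) that the space of $\so(n,\C)$-invariant real symmetric forms is exactly $\mathrm{span}\{\mathrm{Re}\langle\cdot,\cdot\rangle, \mathrm{Im}\langle\cdot,\cdot\rangle\}$ and note that multiplication by a complex scalar $c$ with $c^2 = \alpha - i\beta$ centralizes $\so(n,\C)$ and carries $\mathrm{Re}\bigl((\alpha-i\beta)\langle\cdot,\cdot\rangle\bigr)$ to $\mathrm{Re}\langle\cdot,\cdot\rangle$; or skip the normalization entirely and run your orbit-invariant argument with any invariant symmetric form linearly independent of $g$.
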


\begin{proof} Let $V = \C^n \isom \R^{2n}$ and let $R, H$, etc. be as in the hypotheses. We let $\{ e_1,\ldots,e_n \}$ be an ordered basis of $V$ over $\C$ and denote by $<,>$ the standard non-degenerate, symmetric ($\C$-)bilinear form with respect to this basis. Let $B$ denote some real symmetric bilinear form of neutral signature which is compatible with $<,>$, i.e. such that $SO(n,\C) \isom SO_{\C}(V,<,>)$ is contained in $SO(V,B) \isom SO(n,n)$. By Proposition 5 of \cite{DiLeistNeuk}, $B$ must be given as a linear combination of the real and imaginary parts of $<,>$, so we have $B = \alpha\mathrm{Re}(<,>) +\beta\mathrm{Im}(<,>)$ for some $\alpha,\beta \in \R$.\\

We show that the induced action of $H$ on $S^{n-1,n-1}$ is not transitive by exhibiting a real $B$-null line $\ell \subset V$ for which the $H$-orbit is not open in $S^{n-1,n-1}$, i.e. such that $\rho(\h) + \p(\ell) \subsetneq \so(V,B)$, where $\p(\ell) := \mathfrak{stab}(\ell) \subset \so(V,B)$. We define $\ell = \R w$ for $w := ze_1 + \overline{z}e_n$, $z := (1 + i)/\sqrt{2}$. (Since $<w,w> = 0$, $w$ must be $B$-null.) Define a new basis $\{ f_1,\ldots,f_n \}$ for $V$ over $\C$ as follows: Let $f_1 := w$, $f_j := e_j$ for $2 \leq j \leq n-1$, and let $$f_n := (\alpha \overline{z} + \beta z)e_1 + (\alpha z - \beta \overline{z})e_n.$$ We then calculate the identities: $<f_1,f_1> = <f_n,f_n> = 0$, $<f_1,f_n> = \alpha + i\beta \neq 0$, $B(f_1,f_n) = \alpha^2 + \beta^2 \neq 0$, $B(f_1,if_n) = 0$; for $2 \leq j \leq n-1$, we have $<f_1,f_j> = <f_n,f_j> = 0$ and hence $B(f_1,f_j) = B(f_1,if_j) = B(f_n,f_j) = B(f_n,if_j) = 0$.\\

Thus, with respect to the basis $\{ f_1,\ldots,f_n \}$ the quadratic form of $<,>$ has entries $\alpha + i\beta$ in the bottom-left and top-right corners, and all other entries of the first and last rows and columns are zero. In particular, for any $X \in \h$, the matrix $\rho(X) \in \so_{\C}(V,<,>)$ must have a zero in the bottom-left corner with respect to this (complex) basis. Thus, with respect to the real basis $\{ f_1,\ldots,f_n, if_1,\ldots,if_n \}$, $\rho(X)$ must have a first column with zero's in the $n$th and $2n$th components. But, similarly, we see by looking at the quadratic form for $B$ with respect to this real basis that matrices of $\so(V,B)$ can have non-zero entries in all components of their first column except the $n$th one. In particular, since elements of $\p = \p(\R w)$ have a non-zero entry in only the first component of their first column, we have $\so(V,B) \supsetneq \rho(\h) + \p$, i.e. $R(H)$ does not act locally transitively at the point $\ell \in S^{n-1,n-1}$. \end{proof}

That concludes the proof of Proposition \ref{irreducible H}. In light of Lemma \ref{semi-simple holonomy}, most of Theorem A follows from Propositions \ref{irreducible H} and \ref{irreducible H}. The only claim which still needs to be verified is the existence of the metric $g_0 \in [g]$ which is a local special Einstein product (i.e. that $g_0$ is defined on all of $M$ rather than simply on an open dense subset as for general decomposable conformal holonomy). This follows since in the general case, the metric $g_0$ is defined from so-called normal conformal Killing forms given by the volume forms of the non-degenerate $\mathrm{Hol}(M,[g])$-invariant subspaces $V,W$ in the decomposition $\R^{p+1,q+1} = V \dsum W$. The singular set where $g_0$ fails to be defined is given by the set where the norm of these normal conformal Killing forms vanish. In our case, since the subspaces $V$ and $W$ are definite, this set equals the zero set of the normal conformal Killing forms. But by the discussion in Sec. 2.6 of \cite{CGH}, this zero set must be empty if the conformal Killing form is non-trivial, since $H$ acts transitively on $S^{p,q}$ and $H$ stabilizes the alternating form inducing the conformal Killing form.

\section{Conclusion and outlook}

Theorem A gives a partial restriction on irreducible conformal holonomy groups which complements the other results obtained up to now (for Riemannian and Lorentzian signature). Transitivity of $H \subset O(p+1,q+1)$ on $S^{p,q}$ gives one condition under which one can hope to carry out a Fefferman-type construction inducing a conformal structure of signature $(p,q)$ from a parabolic geometry of some type $(H,Q)$ (cf. Ch. 4.5 of \cite{CSbook} for the general set-up). One consequence of Theorem A is to show that such a Fefferman-type construction can be related to all irreducible connected conformal holonomy groups $H \subset O(p+1,q+1)$ which act transitively on $S^{p,q}$. In fact, except for cases (iii) and (v), all the groups given by Theorem A have been studied and results in the literature show that (locally, at least) conformal manifolds with the given holonomy correspond to such generalized conformal Fefferman spaces:\\

\no Case (ii): For $H = SU(n+1,m+1)$ and $Q = \mathrm{Stab}_H(\C v)$ for a complex null line $\C v \subset \C^{n+1,m+1}$, the conformal Fefferman space induced from a parabolic geometry of type $(H,Q)$ was studied in \cite{CapGover1, CapGover2} and gives an induced conformal structure on a natural $S^1$-bundle of a non-degenerate CR manifold.\\

\no Case (iv): For $H = Sp(n+1,m+1)$ and $Q = \mathrm{Stab}_H(\H v)$ for a quaternionic null line $\H v \subset \H^{n+1,m+1}$, the quaternionic analogue of (ii), the conformal Fefferman space was studied in \cite{altdiss, qcfefferman}; it gives an induced conformal structure on a natural $S^3$- or $SO(3)$-bundle of a quaternionic contact manifold.\\

\no Case (vi): For $H = Spin_0(3,4)$ and $Q = P \cap H$ the stabilizer of a real null line in $\R^{4,4}$, the Fefferman construction associates to a $6$-manifold $M$, endowed with a generic distribution $\Dbdle \subset TM$ of rank $3$, a conformal structure of signature $(3,3)$ on $M$. This was studied in \cite{Bryant, HamSag2}.\\

\no Case (vii): For $H = G_{2,2}$ (the non-compact real form of the simple group $G_2$) and $Q = P \cap H$ the stabilizer of a real null line in $\R^{4,3}$, the Fefferman construction associates to a $5$-manifold $M$, endowed with a generic distribution $\Dbdle \subset TM$ of rank $2$, a conformal structure of signature $(3,2)$ on $M$. Its study goes back to work of E. Cartan in the early 20th century, and more recently in \cite{Nurowski, HamSag} among others.\\

The new case (v), $H = Spin_0(1,8)$, is the subject of ongoing work by the author in collaboration with F. Leitner. In this case, the only non-trivial parabolic subgroup $Q$ of $H$ is the one given by the stabilizer of a null line in $\R^{1,8}$ under the representation $\lambda: Spin_0(1,8) \rightarrow SO_0(1,8)$. We have the general conditions needed for a Fefferman-type construction, which associates to a parabolic geometry of type $(H,Q)$ a conformal structure of signature $(7,7)$ on the total space of a natural bundle over the base space of the geometry of type $(H,Q)$. In other words, from a conformal Riemannain spin manifold $(M,[g],\sigma)$ of dimension $7$, the construction induces in a natural way a conformal metric of signature $(7,7)$ on the total space of a fiber bundle over $M$ (the fiber type is $S^7$). In contrast to the Fefferman-type constructions in the other cases discussed above, for this type $(H,Q)$ the induced Cartan connection of conformal type $(G,P)$ will never be normal unless the Cartan geometry of type $(H,Q)$ has vanishing curvature, in other words, unless the Riemannian spin $7$-manifold is conformally flat.\\

It should be emphasized that an irreducibly acting conformal holonomy group $\mathrm{Hol}(M,[g]) \subset O(p+1,q+1)$ need not, \emph{a priori}, act transitively on $S^{p,q}$. Indeed, the classification of \cite{DiLeist} gives the possible irreducible conformal holonomy group $SO_0(2,1)_i \subset O(3,2)$ for Lorentzian $3$-manifolds. Through work in progress in collaboration with A. J. Di Scala and T. Leistner, we can exclude this case, however, applying the results of \cite{CGH}.\\

In general, transitivity appears to be a rather restrictive condition to place on conformal holonomy, and much more work is evidently needed to obtain a conformal analogue of Berger's list. But studying a weakening of this condition, such as \emph{locally transitive} conformal holonomy (i.e. satisfying $\mathfrak{so}(p+1,q+1) = \mathfrak{hol}(M,[g]) + \mathfrak{p}$), might be a fruitful next step on the way toward that aim. Thanks to the results of \cite{CGH}, we now have important tools for studying the geometry in these cases.

\no \begin{small}SCHOOL OF MATHEMATICS, UNIVERSITY OF THE WITWATERSRAND, P O WITS 2050, JOHANNESBURG, SOUTH AFRICA.\end{small}\\
\no E-mail: \verb"jesse.alt@wits.ac.za"

\end{document}